\newtheorem*{lemma}{Lemma}
\newtheorem*{theoremb}{Representation Formulas}
\newcommand{\R}{{\mathbb{R}}}
\begin{document}
\title{Bessel functions and the wave equation}
\author{Alberto Torchinsky}
\date{}
\begin{abstract}
We  solve the Cauchy problem for the $n$-dimensional wave equation 
using elementary properties of the  Bessel functions.
\end{abstract}
\maketitle

With
  $\nabla^2=D^2_{x_1x_1}+\cdots + D^2_{x_nx_n}$  the Laplacian in $\R^n$, where
 \[ D^2_{x_kx_k}=\frac{\partial^2}{\partial x_k^2}\,,\quad 1\le k\le n\,,
 \]
and $D_t$ and $ D_{tt}$ indicating the first and second order derivatives
 with respect to the variable $t\in\R$, respectively,  the
  wave equation in the upper half-space $\R^{n+1}_+$ is given by
\begin{equation}D^2_{tt}u(x,t)=\nabla^2 u(x,t)\,, \quad x\in \R^n, t>0\,,
\end{equation}
and  the Cauchy problem for this equation consists of finding
  $u(x,t)$ that satisfies (1)  subject to the initial conditions
  \[u(x,0)=\varphi(x)\quad {\text{and}}\quad  D_tu(x,0)=\psi(x)\,,\quad x\in\R^n ,
  \]
where for simplicity we shall take  $\varphi$ and $\psi$ in  $\mathcal S(\R^n)$. 

Applying the Fourier transform to (1) in the
space variables,  considering $t$ as a parameter,  it
readily follows that $\widehat {\nabla^2
u}(\xi,t)=-|\xi|^2\,\widehat u(\xi,t)$, and so $\widehat{u}$
satisfies
\[ D^2_{tt}\widehat{u}(\xi,t) +|\xi|^2 \widehat{u}(\xi,t)=0\,,\quad \xi\in \R^n, t>0\,,\]
subject to
\[ \widehat{u}(\xi,0)=\widehat{\varphi}(\xi)\quad {\text{and}}\quad
 D_t \widehat u(\xi,0)=\widehat{\psi}(\xi)\,,\quad \xi\in\R^n \,.\]

 For each fixed $\xi\in\R^n$  this resulting ordinary differential equation in $t$ is
 the simple harmonic oscillator equation with constant angular frequency $|\xi|$,
and so
\[
\widehat{u}(\xi,t)=\widehat{\varphi}(\xi) \cos(t|\xi|) +
{\widehat{\psi}(\xi)} \,\frac{ \sin(t|\xi|)}{|\xi|}\,,\quad \xi\in \R^n, t>0\,.\]
Hence, the Fourier inversion formula gives for
$(x,t)\in\R^{n+1}_+$,
\begin{align}
u(x,t)=\frac{1}{(2\pi)^n}\int_{\R^n} \widehat{\varphi}(\xi)
 &\cos(t|\xi|)\, e^{i\xi\cdot x}\,d\xi\\
 &+\frac{1}{(2\pi)^n}\int_{\R^n}
{\widehat{\psi}(\xi)} \frac{ \sin(t|\xi|)}{|\xi|}\ e^{i\xi\cdot
x}\,d\xi\,.\nonumber
\end{align}

Since the first integral in (2) can
be obtained from the second by differentiating with respect
to $t$, we will concentrate on the latter.
The idea  is  to interpret   $\sin(|\xi|t)/|\xi| $ as
the Fourier transform of a   tempered distribution, and the key ingredient for this are the following
representation formulas established in \cite{WT}.

\begin{theoremb} Assume that  $n$ is  an odd integer greater than or equal
to $3$. Then, with $d\sigma$ the element of surface area on $\partial B(0,R)$,
\begin{equation}
\frac{\sin (R|\xi|)}{|\xi|}= c_n\,\Big(\frac1{R}\,\frac{\partial}{\partial
R}\Big)^{(n-3)/2}
\Big( \frac{1}{\,\omega_n\,R}\int_{\partial B(0,R)}e^{-ix\cdot \xi}\,d\sigma(x)\Big)\,,
\end{equation}
where $R>0$, $\omega_n$ is the surface measure of the unit ball in
$\R^n$, and
$c_n^{-1}=(n-2)(n-4)\cdots 1 $.

On the other hand, if $n$ is an even integer greater than or equal
to $2$,
\begin{equation}
\frac{\sin(R|\xi|)}{|\xi|}= d_n\,\Big(\frac1{R}\,\frac{\partial}{\partial
R}\Big)^{(n-2)/2}
\Big( \frac{1}{\,v_n }\int_{  B(0,R)}\frac{1}{\sqrt{R^2-|x|^2}}\,
e^{-ix\cdot \xi}\,d x\Big)\,,
\end{equation}
where $R>0$,  $d_n^{-1}={n(n-2)(n-4)\cdots 2 }$,
and $v_n$ is the volume of the unit ball in $\R^n$.
\end{theoremb}

The purpose of this note is to establish (3) and (4) using elementary properties of Bessel functions.
$J_\nu(x)$, the Bessel function of order $\nu$,  
 is defined as the solution of the second order linear equation
\[ x^2\, \frac{d^2y}{dx^2} + x \frac{dy}{dx}+(x^2-\nu^2)\,y=0\,. \]

Several basic properties of the Bessel functions follow readily from their power series    expression \cite {W}. They include the recurrence formula 
\begin{equation}\frac{d}{dx}(x^\nu J_\nu(x))=x^\nu J_{\nu-1}(x)\,,
\end{equation}
the integral representation of Poisson type
\begin{equation} J_\nu(x)=\frac{(x/2)^\nu}{\Gamma(\nu+1/2)\Gamma(1/2)}\,\int_{-1}^1 (1-s^2)^{\nu-1/2}\,e^{ixs}\, ds\,,\end{equation}
and the identity
\begin{equation}J_{1/2}(x)=\frac{\sqrt 2}{\sqrt \pi}\frac1{x^{1/2}}\sin(x)\,,
\end{equation}
for $x>0$.

We will consider the odd dimensional case first. The dimensional constant $c_n$ may vary from appearance to appearance until it is finally determined at the end of the proof. To begin recall that for $n \ge 3$, as established in (18) in \cite{WT},
 
\begin{equation*}\frac1{\omega_{n-1} R }\,\int_{\partial B(0,R)}e^{-ix\cdot \xi}\,d\sigma (x) 
= R^{n-2}\int_{-1}^1 e^{iR|\xi|s}\,(1-s^2)^{(n-3)/2}\,ds\,,
\end{equation*}
which combined with (6) above with $\nu-1/2=(n-3)/2$ there, i.e., $\nu=(n-2)/2$, gives
\begin{align*}
\frac1{\omega_{n } R }\,\int_{\partial B(0,R)}e^{-ix\cdot \xi}\,d\sigma (x) &= c_n \,
 R^{n-2}\frac{J_{(n-2)/2}(R |\xi|)}{(R |\xi|)^{(n-2)/2}} \\
&= c_n\,\frac1{|\xi|^{n-2}}\,(R |\xi|)^{(n-2)/2}J_{(n-2)/2}(R|\xi|)\,.
 \end{align*}

Now, by (5) we obtain that
\[\frac{\partial}{\partial R }\Big(\frac1{\omega_{n } R }\,\int_{\partial B(0,R)}e^{-ix\cdot \xi}\,d\sigma (x)\Big)= c_n \,
 \frac1{|\xi|^{n-2}}|\xi|\,\,(R |\xi|)^{(n-2)/2}J_{(n-4)/2}(R|\xi|)\,,
\]
or
\[\frac1{R}\frac{\partial}{\partial R }\Big(\frac1{\omega_{n } R }\,\int_{\partial B(0,R)}e^{-ix\cdot \xi}\,d\sigma (x)\Big)= c_n \,
 \frac1{|\xi|^{n-4}} \,(R |\xi|)^{(n-4)/2}J_{(n-4)/2}(R|\xi|)\,.
\]

Thus, applying the above reasoning $(n-3)/2$ times, (7)
gives
\begin{align*}\Big(\frac1{R}\frac{\partial}{\partial R }\Big)^{(n-3)/2}
\Big(\frac1{\omega_{n } R }\,&\int_{\partial B(0,R)}e^{-ix\cdot \xi}\,d\sigma (x)\Big) \\
&= c_n \,
 \frac1{|\xi| } \,(R |\xi|)^{1/2}J_{1/2}(R|\xi|)\\
 &= c_n \,\frac1{|\xi| }
  \,(R |\xi|)^{1/2}\,\frac{\sin(R|\xi|)}{(R|\xi|)^{1/2}}\\
  &=c_n \,
   \frac{\sin(R|\xi|)}{ |\xi| }  \,.
\end{align*}

The value of $c_n$ is readily obtained as in \cite{WT}, and (3) has been established. 

To consider the case $n$ even,  one generally proceeds at this point  by a reasoning akin to  Hadamard's 
method of descent, i.e.,  the desired result for the wave equation in even dimension $n$ is 
derived from the result in odd dimension $n+1$, as is done for instance in \cite{WT} for the representation formulas. On
the other hand, 
Bessel functions  provide the desired result for the wave equation in even dimensions directly, by a method akin to ascent: the result for the wave equation for dimension $n=2$ is obtained explicitly, and for even dimension $n +2$ is obtained from the result in even  dimension $n$.


We will first prove a preliminary result. The dimensional constant $d_n$ may vary from appearance to appearance until it is finally determined at the end of the proof.
 
\begin{lemma} The following three  statements hold.
\begin{equation}\int_0^\infty \sin(R\rho)\,J_0(t\,\rho)\,d\rho=\frac1{\sqrt{R^2-t^2}} \,H(R-t)\,,\quad R, t>0\,,
\end{equation}
where $H $ denotes the Heavyside  function.

Furthermore, for $\nu\ge 1$,
\begin{equation} \Big( \frac1{R} \frac{\partial}{\partial R }\Big) \Big( 
\int_0^\infty \sin(R\rho)\,\rho^{\nu -1} J_{\nu-1}(t\,\rho)\,d\rho
\Big) =\frac1{t}
\int_0^\infty \sin(R \rho)\, \rho^\nu J_\nu(t \rho)\,d\rho\,,
\end{equation}
and, consequently, for $1\le j\le \nu$, 
\begin{equation} \Big( \frac1{R} \frac{\partial}{\partial R }\Big)^j \Big( 
\int_0^\infty \sin(R\rho)\,\rho^{\nu -j} J_{\nu-j}(t\,\rho)\,d\rho
\Big) =\frac1{t^j}
\int_0^\infty \sin(R \rho)\, \rho^\nu J_\nu(t \rho)\,d\rho\,.
\end{equation}

\end{lemma}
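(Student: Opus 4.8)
The plan is to treat the three assertions in order: (8) is proved directly from the Poisson representation (6); the recursion (9) is then obtained from the recurrence (5) by differentiating under the integral sign and integrating by parts; and (10) follows from (9) by an immediate induction on $j$. The one point needing care throughout is that none of these integrals converges absolutely — for $\nu\ge1$ the asymptotics $J_\nu(t\rho)\sim(2/\pi t\rho)^{1/2}\cos\big(t\rho-\tfrac{\nu\pi}{2}-\tfrac{\pi}{4}\big)$ even make $\rho^\nu J_\nu(t\rho)$ grow — so I would fix the interpretation at the outset: read each $\int_0^\infty(\cdots)\,d\rho$ as the Abel limit $\lim_{\varepsilon\to0^+}\int_0^\infty e^{-\varepsilon\rho}(\cdots)\,d\rho$, equivalently as an identity of distributions in the variable $R$ on $(0,\infty)$ (which is exactly how these objects enter the Fourier-inversion computation that follows). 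All interchanges of order of integration, differentiations under the integral, and integrations by parts below are understood in this regularized sense, and supplying those justifications is the substantive part of the work.

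For (8) — a classical discontinuous integral of Weber — substitute into $\int_0^\infty\sin(R\rho)J_0(t\rho)\,d\rho$ the representation (6) with $\nu=0$, namely $J_0(t\rho)=\pi^{-1}\int_{-1}^1(1-s^2)^{-1/2}\cos(t\rho s)\,ds$, and interchange the order of integration. In the Abel sense $\int_0^\infty\sin(R\rho)\cos(ts\rho)\,d\rho=\tfrac12\big(\tfrac{1}{R+ts}+\tfrac{1}{R-ts}\big)=R/(R^2-t^2s^2)$, so the problem reduces to evaluating $\pi^{-1}\int_{-1}^1 R\,(1-s^2)^{-1/2}(R^2-t^2s^2)^{-1}\,ds$, a principal value when $R<t$. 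The substitution $s=\sin\theta$ turns this into $(R/\pi)\int_{-\pi/2}^{\pi/2}d\theta/(R^2-t^2\sin^2\theta)$: when $R>t$ there is no pole and the integral equals $\pi/(R\sqrt{R^2-t^2})$, giving $1/\sqrt{R^2-t^2}$; when $R<t$ the pole lies in $(-1,1)$ and, after partial fractions, one is left with $\mathrm{p.v.}\int_{-1}^1(x-s)^{-1}(1-s^2)^{-1/2}\,ds$ with $x=R/t\in(-1,1)$, the (vanishing) finite Hilbert transform of the Chebyshev weight. This is the dichotomy in (8).

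For (9), differentiate the left-hand integrand using $\partial_R\sin(R\rho)=\rho\cos(R\rho)$:
\[
\frac1R\frac{\partial}{\partial R}\int_0^\infty\sin(R\rho)\,\rho^{\nu-1}J_{\nu-1}(t\rho)\,d\rho=\frac1R\int_0^\infty\cos(R\rho)\,\rho^{\nu}J_{\nu-1}(t\rho)\,d\rho\,.
\]
Next, (5) with $x=t\rho$ gives $\tfrac{d}{d\rho}\big[(t\rho)^\nu J_\nu(t\rho)\big]=t(t\rho)^\nu J_{\nu-1}(t\rho)$, hence $\rho^\nu J_{\nu-1}(t\rho)=t^{-1}\tfrac{d}{d\rho}\big[\rho^\nu J_\nu(t\rho)\big]$; inserting this and integrating by parts in $\rho$ transfers the derivative onto $\cos(R\rho)$ and produces $\tfrac{R}{t}\int_0^\infty\sin(R\rho)\,\rho^\nu J_\nu(t\rho)\,d\rho$ plus the boundary term $t^{-1}\big[\cos(R\rho)\,\rho^\nu J_\nu(t\rho)\big]_0^\infty$, which vanishes at $\rho=0$ because $\rho^\nu J_\nu(t\rho)=O(\rho^{2\nu})$ there ($\nu\ge1$) and at $\rho=\infty$ because of the Abel factor; dividing by $R$ gives (9). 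Finally, (10) follows by applying (9) repeatedly from the innermost operator outward: the $k$-th application invokes (9) with order $\nu-j+k$ in place of $\nu$, legitimate since $1\le\nu-j+k\le\nu$ when $1\le j\le\nu$, and it both raises the order of the Bessel factor by one and contributes a further $1/t$, so after $j$ steps the integrand has order $\nu$ and the prefactor is $t^{-j}$. The main obstacle, as flagged above, is not this algebra but the analytic bookkeeping for the conditionally convergent integrals, which is disposed of uniformly once they are read as Abel limits (or as distributions in $R$).
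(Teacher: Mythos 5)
Your proof of (9) and (10) is essentially the paper's: differentiate under the integral in $R$, rewrite $\rho^\nu J_{\nu-1}(t\rho)$ via the recurrence (5) as $t^{-1}\frac{d}{d\rho}\big(\rho^\nu J_\nu(t\rho)\big)$, integrate by parts, and iterate; in fact your signs are the clean ones (the paper's displayed computation carries two compensating sign slips, a spurious minus after $\partial_R\sin(R\rho)$ and another in the integration by parts, with the correct net result). Where you genuinely diverge is (8) and the analytic framing. The paper disposes of (8) by citing Watson's discontinuous integral (Formula (6), p.~405), whereas you derive it from the Poisson representation (6) with $\nu=0$: interchange the order of integration, evaluate the Abel-regularized sine integral to get $R/(R^2-t^2s^2)$, and then either compute the elementary trigonometric integral (case $R>t$, giving $1/\sqrt{R^2-t^2}$) or reduce to the vanishing principal-value integral $\mathrm{p.v.}\int_{-1}^1 (x-s)^{-1}(1-s^2)^{-1/2}\,ds=0$ for $|x|<1$ (case $R<t$); both evaluations are correct, and this makes the note self-contained in the spirit of its stated goal of using only elementary Bessel-function facts, at the cost of a longer argument than a citation. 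Your second departure is the explicit decision to read all the integrals as Abel limits (equivalently, distributionally in $R$), which the paper never addresses even though for $\nu\ge1$ the integrals in (9)--(10) are not even conditionally convergent; this is a real improvement in precision, though note that once the regularizing factor $e^{-\varepsilon\rho}$ is present the integration by parts produces an additional term $\varepsilon t^{-1}\int_0^\infty e^{-\varepsilon\rho}\cos(R\rho)\,\rho^\nu J_\nu(t\rho)\,d\rho$ beyond the boundary terms you mention, whose vanishing as $\varepsilon\to0^+$ (for $R\ne t$, via the large-$\rho$ asymptotics of $J_\nu$) belongs to the ``bookkeeping'' you defer; since the paper performs the same manipulations with no justification at all, this deferral does not put you below the paper's own standard of rigor.
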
 
\begin{proof}
(8) is Formula (6) in \cite{W},  page 405.

Now, 
\begin{align*} \frac{\partial}{\partial R}\Big( \int_0^\infty \sin(R\rho)\,\rho^{\nu-1} 
J_{\nu-1}(t\rho)\,d\rho\Big)
 & = - \int_0^\infty \cos(R\rho)\,\rho^{\nu} J_{\nu-1}(t\rho)\,d\rho\\
 &= - \frac1{t^{\nu}}
\int_0^\infty \cos(R\rho)\,(t \rho)^{\nu} J_{\nu-1}(t\rho)\,d\rho\,,
\end{align*}
which, by (1), equals
\begin{equation*}  \frac{-1}{t^{\nu+1}}
\int_0^\infty \cos(R\rho)\,\frac{\partial}{\partial \rho}\big( \big(t \rho)^{\nu} J_{\nu}(t\rho)\big)\,d\rho
= \frac{R}{t}
\int_0^\infty \sin(R\rho)\,  \rho^{\nu} J_{\nu}(t\rho)\,d\rho\,,
\end{equation*}
which proves (9).

(10) follows by repeated applications of (9), and we have finished.
\end{proof}

Finally, recall that  the Fourier transform of a radial function $f$ on $\R^n$ is given by the expression  \cite {W},
\[\widehat f(|\xi|)=d_n\,\frac1{|\xi|^{(n-2)/2}}\,\int_0^\infty \rho^{n/2}\, f(\rho) \,J_{(n-2)/2}(|\xi|\rho)\,d\rho\,.
\]

In particular,
  we have
\begin{equation}\int_{\R^n} \frac{\sin(R|\xi|)}{|\xi|}\,e^{-ix\cdot\xi}\,d\xi=d_n\,\frac1{|x|^{(n-2)/2}}\,
\int_0^\infty \rho^{n/2} \, \frac{\sin(R \rho)}{\rho}\,J_{(n-2)/2}(|x|\rho)\,d\rho\,.
\end{equation}

Let now $n=2k$ be an even integer. Then by (11), 
  \begin{equation*}\int_{\R^n} \frac{\sin(R|\xi|)}{|\xi|}\,e^{-ix\cdot\xi}\,d\xi=d_n\,\frac1{|x|^{(k-1)}}\,
\int_0^\infty 
 \sin(R \rho)\,\rho^{k-1}  J_{k-1}(|x|\rho)\,d\rho\,,
\end{equation*} 
and, therefore,  (10) with $\nu=j=k-1$ there yields
\begin{align*} \int_{\R^n} \frac{\sin(R|\xi|)}{|\xi|}\,e^{-ix\cdot\xi}\,d\xi &=d_n\, \frac1{|x|^{(k-1)}}
\int_0^\infty \sin(R \rho)\, \rho^{(k-1)} J_{k-1}(|x| \rho)\,d\rho\\
&= d_n\,\Big( \frac1{R} \frac{\partial}{\partial R }\Big)^{(k-1)} \Big( 
\int_0^\infty \sin(R\rho)\, J_{0}(|x|\,\rho)\,d\rho
\Big)\\
&= d_n\,\Big( \frac1{R} \frac{\partial}{\partial R }\Big)^{(k-1)} 
\Big(\frac1{\sqrt{R^2-|x|^2}} \,H(R-|x|)\Big) \,.
\end{align*}

Thus by the Fourier inversion formula, 
\begin{align*}\frac{\sin(R|\xi|)}{|\xi|}&=d_n\, \Big(\frac1{R}\,\frac{\partial}{\partial
R}\Big)^{(n-2)/2}\Big( \int_{\R^n} 
\frac1{\sqrt{R^2-|x|^2}} \,H(R-|x|) \,e^{-ix\cdot \xi}\,dx\Big) \\
&= d_n\,\Big(\frac1{R}\,\frac{\partial}{\partial
R}\Big)^{(n-2)/2} \Big( \frac{1}{\,v_n }\int_{  B(0,R)}\frac1{\sqrt{R^2-|x|^2}}\,
e^{-ix\cdot \xi}\,d x\Big)\,,
\end{align*}

The constant $d_n$ is readily determined as in \cite{WT},  and we have finished.


\begin{thebibliography}{99}
\bibitem{WT} {A. Torchinsky}, \textit{The Fourier transform and the wave equation},  Amer.
Math. Monthly 118 (2011) no.7, 599-609.
\bibitem{W} {G. N.  Watson}, {\em A treatise on the theory of Bessel functions.
 Cambridge Mathematical Library.} Cambridge University Press, Cambridge, 1995.
\end{thebibliography}
 \end{document}